\newtheorem{ctr}{}[section]
\newtheorem{theorem}[ctr]{Theorem}
\newtheorem{lemma}[ctr]{Lemma}
\newtheorem{claim}[ctr]{Claim}
\newtheorem{conjecture}[ctr]{Conjecture}
\newcommand{\Aut}{\operatorname{Aut}}
\newcommand{\Orb}{\textnormal{Orb}}
\renewcommand{\leq}{\leqslant}
\renewcommand{\geq}{\geqslant}
\newcommand{\hg}{H_g}
\title{Distinguishing infinite star-free graphs}
\author{Marcin Stawiski \\ stawiski@agh.edu.pl}
\affil{AGH University of Science and Technology,\\ Faculty of Applied Mathematics, \protect\\al. Mickiewicza 30, 30-059 Krakow, Poland}
\begin{document}

\maketitle

\begin{abstract}
Call a colouring of a graph \emph{distinguishing} if the only automorphism of this graph which preserves said colouring is the identity.  Let $H$ be an arbitrary graph. We say that a graph $G$ is \emph{$H$-free} if $G$ does not contain an induced subgraph isomorphic to $H$. Kargul, Musiał, Pal and Gorzkowska showed that if $n$ is a natural number greater than two, then every finite connected $K_{1,n}$-free graph of order at least six  admits a distinguishing edge colouring with at most $n-1$ colours. We extend this result to all locally finite connected $K_{1,n}$-free graphs of order at least six.

\bigskip\noindent \textbf{Keywords:} asymmetric colouring, distinguishing colouring, distinguishing index, infinite graph, automorphism breaking, star-free graph

\noindent {\bf \small Mathematics Subject Classifications}: 05C15, 05C25, 05C63
\end{abstract}

\section{Introduction}

 We say that a graph is \emph{locally finite} if its every vertex has finite degree.  Let $H$ be an arbitrary graph. We say that a graph $G$ is $H$\emph{-free} if $G$ does not contain an induced subgraph isomorphic to $H$. 
Call a (not necessarily proper) colouring of a graph \emph{distinguishing} if the only automorphism which preserves it is the identity.
 The \emph{distinguishing index} $D'(G)$ of a graph $G$ is the least number of colours in a distinguishing edge colouring of $G$. The least number of colours in a distinguishing vertex colouring of a graph $G$ is called its \emph{distinguishing number}, and it is denoted by $D(G)$. 
Distinguishing colourings   were firstly studied by Babai  \cite{BAB} in 1977 in the case of  infinite regular trees under the name \emph{asymmetric colourings}. They arose during his study of the graph isomorphism problem, which eventually led to his proof of the existence of quasi-polynomial algorithm for this problem (see \cite{babaiisomorphism}). Distinguishing colourings in a more general group-theoretic context were introduced by Cameron, Neumann and Saxl \cite{primitive} in 1984. Finally, distinguishing edge colourings were firstly studied by Kalinowski and Pilśniak \cite{KP} in 2015. Note that the problem whether a given graph has the distinguishing index at most two is closely related (but not equivalent) to the existence of its asymmetric spanning subgraph.

Denote by $\Delta(G)$ the supremum of degrees of vertices of a graph $G$. For a connected finite graph $G$, we have $D(G)\leq \Delta(G)$ unless $G$ is isomorphic to $C_5$, $K_{\Delta+1}$, or  $K_{\Delta,\Delta}$ (see \cite{CT,klavzar}). Similarly, it has been proved in \cite{KP} that if $G$ is a connected finite graph, then $D'(G) \leq \Delta(G)$ unless $G\cong K_2$, $G\cong C_3$, $G\cong C_4$, or $G\cong C_5$.
 For a connected infinite graph $G$ the optimal upper bounds for its distinguishing number and its distinguishing index are both equal to $\Delta-1$ unless $G$ is a double ray \cite{LPS,PS}.
 Moreover, Lehner and Smith have proved that the distinguishing index of a connected graph cannot be much greater than its distinguishing number  (see  \cite{lehner:smith}). However, in many cases the distinguishing index of a graph is significantly smaller than its distinguishing number. Broadly studied classes of graphs for which we can significantly improve the upper bounds for the distinguishing index but not for the distinguishing number include traceable finite graphs \cite{Pilsniakimproving}, regular connected graphs of finite degree \cite{regular}, and finite connected star-free graphs \cite{gorzkow}.

In this paper we investigate distinguishing edge colourings of connected  infinite locally finite star-free  graphs.  Note that there exist connected  infinite locally finite  claw-free graphs  with the distinguishing number equal to $\aleph_0$. Hence, the problem we investigate in this paper is trivial for vertex colourings but may still be interesting for edge colourings.  As mentioned before, for finite graphs this problem has  already been investigated in \cite{gorzkow}, where the following result has been obtained.

\begin{theorem}[Gorzkowska, Kargul, Musiał, Pal 2020 \cite{gorzkow}]\label{thm:gorzkow}
Let $n$ be an arbitrary natural number greater than two. If $G$ is a finite connected $K_{1,n}$-free graph of order at least six, then $D'(G)\leq n-1$.
\end{theorem}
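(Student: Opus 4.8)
The plan is to split on whether $\Delta(G)$ exceeds $n-1$. If $\Delta(G)\le n-1$, then the result is immediate from the bound of Kalinowski and Pilśniak \cite{KP} recalled above: every connected finite graph of order at least six satisfies $D'(G)\le \Delta(G)$, since the four exceptional graphs $K_2,C_3,C_4,C_5$ all have order less than six, and hence $D'(G)\le\Delta(G)\le n-1$. All the real content therefore lies in the complementary case $\Delta(G)\ge n$, where the trivial bound must be beaten; this is precisely where $K_{1,n}$-freeness enters, since a vertex $v$ with $\deg(v)\ge n$ cannot have $n$ pairwise non-adjacent neighbours, so its neighbourhood is forced to be dense.

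The core structural input I would use is that $K_{1,n}$-freeness means $\alpha(G[N(v)])\le n-1$ for every vertex $v$. By the Gallai--Milgram theorem, the vertex set of $G[N(v)]$ can then be covered by at most $n-1$ vertex-disjoint paths. I would use these paths as \emph{rulers}: along each path consecutive neighbours of $v$ are adjacent, so there is an edge inside $N(v)$ available to receive a colour, and an asymmetric colour pattern on a path (possible because $n-1\ge 2$) fixes that path pointwise. The combination of the colour of the spoke $vu$, the colour pattern of the path through $u$, and the position of $u$ along that path should then yield a colour-signature of each neighbour that is rigid once $v$ is fixed.

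With this in hand, the proof would proceed by rooting $G$ at a suitable vertex $r$ and colouring along a breadth-first search, layer by layer. The inductive claim is that any colour-preserving automorphism fixing every vertex of the first $k$ layers must fix every vertex of layer $k+1$: such an automorphism fixes each already-coloured vertex $v$, hence preserves the path decomposition and the signatures described above on $N(v)$, and so fixes the neighbours of $v$ in the next layer; connectedness then forces it to be the identity. The extra ingredient is anchoring the root, for which I would make the local colouring around $r$ (and, if needed, a short initial segment) globally unique, using the order-at-least-six hypothesis to guarantee enough room; the small configurations violating this would be checked directly, which is also where the threshold of six enters.

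The step I expect to be the genuine obstacle is the neighbourhood-rigidity lemma itself: showing that $n-1$ colours always suffice to make $G[N(v)]$, together with the spokes from $v$, rigid under the stabiliser of $v$. The difficulty is that a neighbourhood with bounded independence number can still have large clique-cover number, so one cannot simply colour by a clique partition; the argument must lean on the path decomposition and on the abundance of intra-neighbourhood edges (among any $n$ neighbours at least two are joined) to manufacture distinct signatures with only $n-1$ colours, while simultaneously respecting the edges running between different paths of the decomposition and between consecutive layers of the search. Reconciling these local choices into one globally consistent colouring is the main bookkeeping challenge.
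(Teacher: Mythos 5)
This theorem is not proved in the paper at all: it is stated with attribution and used as a black box from \cite{gorzkow} (the paper's own contribution, Theorem \ref{thm:bezgwiazd}, extends it to infinite locally finite graphs, and its Theorem \ref{thm:smallgraphs} even takes the present statement as an ingredient). So there is no proof here to compare yours against, and your proposal must stand on its own --- which it does not yet do.

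The genuine gap is the one you yourself flag: the ``neighbourhood-rigidity lemma'' is left unproved, and it is not a technical detail --- it is essentially the entire theorem. The Kalinowski--Pil\'sniak reduction disposes only of the trivial case $\Delta(G)\le n-1$, where $K_{1,n}$-freeness holds vacuously; everything of substance is deferred to the claim that $n-1$ colours can make $G[N(v)]$ together with the spokes rigid under the stabiliser of $v$, simultaneously and compatibly for all $v$. Two concrete problems. First, a Gallai--Milgram path cover is not canonical: a colour-preserving automorphism fixing $v$ has no a priori reason to map your chosen path system to itself, so your sentence ``such an automorphism fixes each already-coloured vertex $v$, hence preserves the path decomposition'' is a non sequitur --- the colouring would have to \emph{encode} the paths, and with only $n-1$ colours, up to $n-1$ paths, spokes forced to repeat colours once $\deg(v)\ge n$, and degenerate paths (the flip of a single edge cannot be broken by edge colours alone), establishing this is precisely the hard part. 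Second, the edges of $G[N(v)]$ lie in several neighbourhoods at once and also run between BFS layers, so the local patterns you want to prescribe at different centres compete for the same edges; you name this (``the main bookkeeping challenge'') but offer no mechanism to resolve it. The paper's own finite rooted result, Theorem \ref{thm:smallgraphs}, illustrates what a complete argument of this kind needs: induction on the size of the graph, an orbit analysis around the root, and an explicit list of exceptional rooted graphs ($K_{1,n-1}$ and the hourglass rooted at their central vertices) on which naive local rigidity fails. Your plan has no counterpart to any of these steps, so as it stands it is a strategy, not a proof.
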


The main result of this paper is the extension of the  theorem above to all locally finite graphs of order at least six, which is obtained by using different techniques that in the proof of the said theorem.

\begin{theorem}\label{thm:bezgwiazd}
Let $n$ be an arbitrary natural number greater than two. If $G$ is a locally finite connected $K_{1,n}$-free graph of order at least six, then $D'(G)\leq n-1$.
\end{theorem}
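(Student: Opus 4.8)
The plan is to reduce to the infinite case and then build a distinguishing colouring as a limit of colourings of an exhausting sequence of balls, invoking Theorem~\ref{thm:gorzkow} on each ball. Since every finite graph is covered by Theorem~\ref{thm:gorzkow}, I may assume $G$ is infinite; being connected, locally finite and infinite, $G$ has order $\aleph_0\ge 6$, its balls $B_i=\{v:d(r,v)\le i\}$ about a fixed root $r$ are finite, connected and induced $K_{1,n}$-free subgraphs exhausting $G$, and $\Aut(G)$, with the topology of pointwise convergence, is totally disconnected with compact vertex stabilisers. The structural feature I will use repeatedly is that $K_{1,n}$-freeness means no neighbourhood contains an independent set of size $n$, so the independence number of every neighbourhood is at most $n-1$; this is exactly what will let $n-1$ colours suffice.

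First I would handle the automorphisms fixing $r$. Every such automorphism preserves each sphere about $r$, hence preserves each ball $B_i$ setwise and restricts to an element of the finite group $A_i\le\Aut(B_i)$ induced on $B_i$; moreover $\Aut(G)_r=\varprojlim_i A_i$. Thus it suffices to produce a colouring $c$ of $E(G)$ whose restriction to each $B_i$ is $A_i$-distinguishing: any nontrivial root-fixing $\phi$ preserving $c$ would move a vertex inside some $B_i$, so $\phi|_{B_i}\in A_i$ would be a nontrivial colour-preserving element, a contradiction. To obtain such a $c$ I would run a compactness argument, in which Theorem~\ref{thm:gorzkow} supplies, for each $i$ with $|B_i|\ge 6$, an $\Aut(B_i)$-distinguishing (hence $A_i$-distinguishing) colouring, and one assembles these into a single colouring of $G$ by König's lemma over the tree of partial colourings. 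The delicate point is coherence: a distinguishing colouring of $B_{i+1}$ need not restrict to a distinguishing colouring of $B_i$, so the tree must be set up so that each nontrivial symmetry is killed at the \emph{first} level at which it acts nontrivially and is never resurrected---once $\phi|_{B_i}$ fails to preserve $c|_{E(B_i)}$, the same holds on every larger ball. Verifying that $n-1$ colours always suffice for this, even in the presence of large cliques of mutually adjacent twins on a sphere, is where the bound on neighbourhood independence enters, and I expect reconciling the finite guarantee of Theorem~\ref{thm:gorzkow} with the consistency required in the limit to be one of the two technical cores of the argument.

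The main obstacle is the automorphisms that move $r$, since these do not preserve the balls and so are invisible to the construction above; being non-compact, they cannot be broken by the profinite argument. Here I would first invoke an order dichotomy. If $\phi$ preserves $c$ and $\phi(r)=r'\ne r$ but the orbit of $r$ under $\langle\phi\rangle$ is finite, then some power $\phi^p$ fixes $r$, and by the previous step $\phi^p=\id$; hence $\phi$ has finite order and all its orbits are finite, so $\phi$ stabilises the finite sets $N_i[\Orb(r)]$, which themselves exhaust $G$. Running the ball argument about the orbit rather than a single vertex then disposes of the finite-order case. The remaining, genuinely infinite, case is an automorphism with an unbounded orbit of $r$---a ``translation'' along a ray or end.

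To defeat translations I would, while building $c$, reserve enough freedom (recall $n-1\ge 2$) to make the colouring \emph{aperiodic}: along a fixed spanning structure rooted at $r$ I would encode, using blocks of two colours of strictly increasing length, a pattern that no nontrivial shift of $G$ can preserve, so that any colour-preserving automorphism is forced to fix the root and we fall back on the first step. The difficulty is to carry out this aperiodic marking simultaneously with, and without destroying, the $A_i$-distinguishing property of the limit; I would interleave the two requirements along the exhaustion, spending the new edges available at each stage partly on breaking the current frontier symmetries and partly on advancing the aperiodic anchor. Combining the three cases---root-fixing, finite-order root-moving, and translational---yields a colouring with trivial colour-preserving automorphism group, that is, $D'(G)\le n-1$.
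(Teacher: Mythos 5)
Your reduction to the infinite case is fine, but both of the steps you yourself flag as ``technical cores'' are genuine gaps, not details, and neither survives scrutiny. First, the K\"onig's lemma argument: the tree whose level-$i$ nodes are colourings of $E(B_i)$ that are $A_j$-distinguishing for \emph{every} $j\le i$ is finitely branching, but nothing guarantees it is infinite. Theorem~\ref{thm:gorzkow} gives each ball $B_i$ \emph{some} distinguishing colouring, but that colouring's restrictions to smaller balls need not be distinguishing, and there is no a priori reason that coherent choices exist at every level --- compactness is exactly what fails for distinguishing colourings, which is why the infinite case is not a corollary of the finite one. The paper's substitute for this missing consistency mechanism is Theorem~\ref{thm:smallgraphs}: a strengthening of the finite result to \emph{rooted} graphs producing $n-1$ pairwise \emph{non-isomorphic} distinguishing colourings, so that finite pieces hanging off a fixed global skeleton can be coloured independently and still be told apart. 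Second, your treatment of root-moving automorphisms does not close. In the finite-order case, the exhaustion $N_i[\Orb(r)]$ depends on the unknown automorphism $\phi$, so a single colouring built in advance around $r$ cannot have been arranged to be distinguishing for the induced groups of all these $\phi$-dependent exhaustions. In the translation case, an ``aperiodic marking along a fixed spanning structure'' is only useful if colour-preserving automorphisms are forced to map that structure to itself; an automorphism of $G$ has no obligation to preserve a spanning tree, so the encoded pattern is invisible to it. Making a marked structure automorphism-invariant is precisely the hard part.

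This is where the paper's actual route diverges completely: instead of balls and a profinite limit, it takes a maximal family $F$ of \emph{induced} rays (Lemma~\ref{lem:rodzinaind}), and designs the colouring so that the family of blue induced rays is provably stabilized by every colour-preserving automorphism --- inducedness is what makes ``blue induced ray'' an automorphism-invariant notion, and maximality rules out blue rays escaping into $G-V(F)$. The rays are then individually fixed by length-encodings of blue subpaths (for rays with finitely many attachments) and by ``favourite vertices'' (for the others), after which the components of $G-V(F)$ are finite rooted graphs handled by the non-isomorphic colourings of Theorem~\ref{thm:smallgraphs}, with the $K_{1,n}$-freeness entering through a claim bounding orbit sizes of these components ($|\Orb_\Gamma(B)|\le n-2$). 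In short, your proposal correctly identifies all three enemy classes of automorphisms, but the two tools you would need to defeat them --- a coherence principle for finite distinguishing colourings, and an automorphism-invariant aperiodic anchor --- are exactly the theorems you have not proved, and the paper's Theorem~\ref{thm:smallgraphs} and Lemma~\ref{lem:rodzinaind} exist precisely to fill those two holes.
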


To see that the  bound above is tight for finite graphs, it is enough to consider  $K_{1,n-1}$. Note that there exist  infinite connected $K_{1,3}$-free graphs  which have non-trivial automorphisms. Hence, this bound is also tight in the case of infinite locally finite  $K_{1,3}$-free graphs. 
The question whether the bound in Theorem \ref{thm:bezgwiazd} is  tight for any $n\geq4$ in the case of infinite graphs remains open.

\begin{conjecture}
For every natural number $n$  greater than three there exists a connected infinite locally finite graph $G$ such that $D'(G) = n-1$.
\end{conjecture}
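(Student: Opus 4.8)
The goal is to exhibit, for each $n\geq 4$, a connected infinite locally finite $K_{1,n}$-free graph $G$ with $D'(G)=n-1$. Since Theorem~\ref{thm:bezgwiazd} already gives $D'(G)\leq n-1$ for every such $G$, the whole content is the lower bound: I must produce a $G$ for which no edge colouring with $n-2$ colours is distinguishing, that is, every $(n-2)$-colouring of $E(G)$ is preserved by some non-identity automorphism.

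First I would isolate the mechanism behind the finite extremal example $K_{1,n-1}$: its automorphism group is $S_{n-1}$ acting on the $n-1$ spokes, each spoke is the unique edge at a leaf, so breaking all of this symmetry forces the $n-1$ spokes to receive pairwise distinct colours. The plan is to reproduce this ``orbit of $n-1$ interchangeable single edges'' phenomenon inside an infinite graph. Concretely I would look for a hub vertex $c$ together with vertices $v_1,\dots,v_{n-1}$ that are mutual twins, so that every permutation in $S_{n-1}$ of them extends to an automorphism of finite support, arranged so that each $v_i$ contributes essentially one ``free'' edge to the colouring, and then attach the infinite, connected, locally finite remainder so as to keep the graph $K_{1,n}$-free without disturbing this local $S_{n-1}$-action. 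Given such a configuration, a pigeonhole argument finishes the lower bound: with only $n-2$ colours two of the $v_i$ must look identical, and the transposition swapping them is a non-trivial colour-preserving automorphism.

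The main obstacle, and the reason the statement is only a conjecture, is that these requirements are in sharp tension. Being $K_{1,n}$-free forces every neighbourhood to have independence number at most $n-1$, so a hub can carry at most $n-1$ pairwise non-adjacent twins, and attaining exactly $n-1$ of them each with a single incident edge is precisely the rigid star $K_{1,n-1}$, which admits no extension: the hub may have no further neighbour, since one would create an induced $K_{1,n}$, and so the component cannot be made infinite. Relaxing the twins to have degree $d\geq 2$, or to form a clique, immediately lowers the number of colours the gadget can force; degree-$d$ twins only force about $\lceil(\#\text{twins})^{1/d}\rceil$ colours, and clique-type twins can be separated with as few as two colours, while a single infinite branch hung on $c$ can always be recoloured to destroy the symmetry with two colours, collapsing $D'(G)$ to $2$. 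Thus no purely local gadget can simultaneously force $n-1$ colours and sit inside an infinite locally finite $K_{1,n}$-free graph.

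Overcoming this is where the real work lies, and I would pursue a non-local construction in which the $n-1$ interchangeable edges are not confined to one hub but are tied together by shared, globally rigid structure, so that the infinite part can neither be recoloured independently nor create a forbidden induced star. One concrete avenue is a self-similar or periodic graph in which infinitely many copies of a finite ``near-star'' gadget overlap along common edges, the overlaps preventing the independent recolouring that destroys the spider while the periodicity preserves exactly an $S_{n-1}$ worth of unbreakable local symmetry; one would then verify, by a careful analysis of all $(n-2)$-colourings, that a colour-preserving transposition always survives. I expect the simultaneous verification of $K_{1,n}$-freeness and of the colour-counting to be the hardest and most delicate step, and it is entirely possible that pushing it through forces a reconsideration of whether the value $n-1$ is attainable for infinite graphs at all, which is precisely the uncertainty this conjecture records.
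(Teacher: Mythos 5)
The statement you were asked to prove is not a theorem of the paper at all: it is stated as an open conjecture, immediately after the sentence ``The question whether the bound in Theorem \ref{thm:bezgwiazd} is tight for any $n\geq 4$ in the case of infinite graphs remains open.'' So there is no proof in the paper to compare yours against, and your proposal does not prove the statement either: you never exhibit a concrete graph $G$, never verify $K_{1,n}$-freeness or local finiteness for any candidate, and never establish the lower bound $D'(G)\geq n-1$, which you correctly identify as the whole content. Indeed, your final sentences concede exactly this. What you have written is a sound diagnosis of the obstruction --- in the finite extremal example $K_{1,n-1}$ the $n-1$ colours are forced by an orbit of $n-1$ pendant edges at a hub, and $K_{1,n}$-freeness together with local finiteness prevents attaching an infinite part to that rigid gadget --- plus a speculative programme (overlapping gadgets with globally shared rigidity) of which no step is carried out. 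As a proof attempt it therefore has a total gap: the construction and its verification, i.e.\ all of the mathematical content, are absent, and the problem remains open in the paper as well.

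One observation you missed is worth making explicit. As literally phrased, the conjecture does not require $G$ to be $K_{1,n}$-free, and under that literal reading it is trivially true: take a ray $v_0v_1v_2\dots$ and attach $n-1$ pendant leaves to $v_0$. This graph is connected, infinite and locally finite; for $n\geq 4$ the vertex $v_0$ is the unique vertex of degree $n$, so every automorphism fixes the ray pointwise and permutes the leaves, whence the automorphism group is $S_{n-1}$ acting on the $n-1$ pendant edges and $D'(G)=n-1$ exactly. Of course this graph contains an induced $K_{1,n}$ at $v_0$, so it says nothing about the tightness of Theorem \ref{thm:bezgwiazd}, which is what the authors actually care about. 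You silently adopted the intended, stronger reading (adding the hypothesis that $G$ be $K_{1,n}$-free), which is the right interpretation in context; but noticing the discrepancy would have let you either dispose of the literal statement in one paragraph or point out that the conjecture as written needs the $K_{1,n}$-freeness hypothesis to be non-trivial. Either remark would have been more valuable than the unexecuted construction plan.
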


Note that every graph $G$ with the maximum degree $\Delta$ is  $K_{1,\Delta+1}$-free. If for every $n$ greater than three there are no graphs satisfying the statement of the  conjecture above, then it would imply the mentioned bound $D'(G)\leq \Delta(G)-1$ for every connected infinite locally finite graph $G$ of maximum degree at least four.

\section{Definitions and auxiliary results}

Let $\Omega$ be an arbitrary set, let $\Gamma$ be a group acting on $\Omega$, and let $X,Y\subseteq \Omega$. The \emph{orbit} $\Orb_\Gamma(X)$ of $X$ with respect to $\Gamma$ is the set $\{\varphi(X)\colon  \varphi \in \Gamma \}$. If $X=\{ x\}$, then we refer to the orbit of $x$ instead of $\{x\}$.
We say that $X$ is \emph{fixed} if every $\varphi \in \Gamma$ acts trivially on $X$. If $X=\{ x\}$, then we say that $x$ is fixed instead of $\{x\}$. We say that $X$ is \emph{stabilized} if $\varphi(X)=X$ for every $\varphi \in \Gamma$. 

Let $c$ be a partial colouring of $\Omega$, and let $\varphi\in \Gamma$. 
We say that $c$ \emph{breaks} $\varphi$, if there exists  $x\in \Omega$ such that both $c(x)$ and $c(\phi(x))$ are defined and $c(x)\neq c(\varphi(x))$. Otherwise, we say that $\varphi$ \emph{preserves} $c$. Call a partial colouring $c$ of $\Omega$ \emph{distinguishing} if the only element of $\Gamma$ which preserves  $c$ is the identity. 

Let $R$ be a subset of the set of vertices of graph $G$. A pair $(G,R)$ is called a \emph{rooted graph}, and we say that $R$ is the \emph{root} of $(G,R)$. If $R=\{r\}$, then we refer to $r$ as the root of $(G,R)$. 
If $R=V(H)$ is the set of vertices of some subgraph $H$ of $G$, then we say that $H$ is the \emph{root} of $(G,H)$. Automorphisms of a rooted graph $(G,R)$ are exactly these automorphisms of $G$ which fix $R$.
The set of automorphisms of $(G,R)$ will be denoted by $\Aut(G,R)$, or by $\Aut(G)$ if $R=\emptyset$.
In this paper $\Omega$  usually denotes the set of edges of some graph. The meaning of  $\Gamma$ (if not stated explicitly) shall follow from the context.
For graph notions which are not defined in this paper  see \cite{Diestel}.

We say that colourings $c,d$ of $\Omega$ are \emph{non-isomorphic} if for each $\varphi \in \Gamma$ there exists $x \in \Omega$ such that $c(x)\neq d(x)$. For the proof of the main theorem of this paper we need to obtain non-isomorphic colourings of some finite rooted graphs.
The \emph{hourglass} $\hg$ is the graph obtained by gluing two copies of $K_3$ at a common vertex. The vertex of degree four in $\hg$ is called its \emph{central} vertex.

\begin{theorem}\label{thm:smallgraphs}
Let $n$ be an arbitrary natural number greater than two. If $r$ is a vertex of a connected finite $K_{1,n}$-free graph $G$, then one of the following cases holds:
\begin{enumerate}[label=\textnormal{(C\arabic*)}]
    \item there exist at least $n-1$ non-isomorphic distinguishing edge colourings of the rooted graph $(G,r)$ using at most $n-1$ colours, \label{itm:thm:smallgraphs1}
    \item $n=3$, $G \cong \hg$, and $r$ is the central vertex of $G$,\label{itm:thm:smallgraphs2}
    \item $G \cong K_{1,n-1}$, and $r$ is the central vertex of $G$, or \label{itm:thm:smallgraphs3}
    \item $G \cong K_1$.\label{itm:thm:smallgraphs4}
\end{enumerate}
\end{theorem}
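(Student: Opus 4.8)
The plan is to recast condition \ref{itm:thm:smallgraphs1} as a counting statement and then to build the required colourings by exploiting a single ``flexible'' edge. Write $\Gamma := \Aut(G,r)$ for the group acting on $E(G)$. Since a colouring is distinguishing exactly when its stabiliser in $\Gamma$ is trivial, every distinguishing colouring has a $\Gamma$-orbit of size precisely $|\Gamma|$, all of whose members are again distinguishing, and two distinguishing colourings are isomorphic if and only if they lie in one orbit. Hence the number of pairwise non-isomorphic distinguishing colourings equals the total number of distinguishing colourings divided by $|\Gamma|$, so \ref{itm:thm:smallgraphs1} holds if and only if $G$ admits at least $(n-1)\,|\Gamma|$ distinguishing edge colourings with at most $n-1$ colours. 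Two degenerate cases are immediate: if $G\cong K_1$ we are in \ref{itm:thm:smallgraphs4}, while if $\Gamma$ is trivial then every one of the $(n-1)^{|E(G)|}\geq n-1$ colourings is distinguishing and no two are isomorphic, giving \ref{itm:thm:smallgraphs1}. So I may assume $G\neq K_1$ and $\Gamma\neq\{\id\}$.

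The central construction would be the following. Suppose I can find an edge $e$ that is fixed by every element of $\Gamma$ together with a colouring of $E(G)\setminus\{e\}$ that already breaks every non-identity element of $\Gamma$. Then colouring $e$ with each of the $n-1$ available colours in turn yields colourings $c_1,\dots,c_{n-1}$, each distinguishing because the automorphisms are broken off $e$. They are pairwise non-isomorphic: any isomorphism $\varphi$ carrying $c_i$ to $c_j$ would fix $e$ (as $e$ is fixed by $\Gamma$) and therefore force $i=c_i(e)=c_j(\varphi(e))=c_j(e)=j$. Thus a $\Gamma$-fixed edge carrying this much freedom immediately delivers \ref{itm:thm:smallgraphs1}, and it is the absence of such a flexible edge that flags the exceptional graphs: in $K_{1,n-1}$ rooted at its centre $\Gamma$ permutes all edges transitively, and in $\hg$ rooted at its central vertex every edge is moved by some automorphism.

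To locate such a flexible edge in general I would argue by induction on $|V(G)|$. Every connected graph on at least two vertices has at least two non-cutvertices, so I may choose a non-cutvertex $w\neq r$; then $G':=G-w$ is connected and, being an induced subgraph, is again $K_{1,n}$-free, while $r\in V(G')$. Applying the induction hypothesis to $(G',r)$ I obtain, outside the exceptional cases, $n-1$ non-isomorphic distinguishing colourings of $G'$, which I then extend across the edges incident with $w$. Here the edges at $w$ play a double role: they are the natural candidates for the flexible edge of the previous paragraph and, because there are $n-1$ colours to place on each of them, they supply precisely the multiplicity needed to manufacture $n-1$ non-isomorphic extensions. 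The hypothesis that $G$ is $K_{1,n}$-free enters at this point to control the local symmetry at $w$: since the independence number of $N(w)$ is at most $n-1$, $w$ can have only boundedly many ``twins'', which limits how the automorphisms may act near $w$.

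I expect the main obstacle to be exactly the automorphisms of $(G,r)$ that move $w$. These do not restrict to automorphisms of $G'$, so the induction hypothesis says nothing about them; they must be broken directly by the colours placed on the edges at $w$, and one must simultaneously guarantee that no such automorphism re-identifies two of the extended colourings, which would destroy non-isomorphism even though the underlying colourings of $G'$ were distinct. Controlling these ``boundary'' automorphisms via $K_{1,n}$-freeness, and isolating the configurations in which no recolouring at $w$ can break them, is the heart of the argument. Precisely these rigid configurations are what force $G$ into one of the structured exceptions, and I would finish by a direct verification of the small base cases: rooted $\hg$ with $n=3$ yields only one non-isomorphic distinguishing $2$-colouring and hence falls under \ref{itm:thm:smallgraphs2}, $K_{1,n-1}$ rooted at its centre admits a single such colouring and falls under \ref{itm:thm:smallgraphs3}, and every remaining rooted graph of small order can be checked to satisfy \ref{itm:thm:smallgraphs1}.
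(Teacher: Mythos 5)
Your reduction of \ref{itm:thm:smallgraphs1} to an orbit-counting statement, and the observation that a $\Gamma$-fixed edge $e$ together with a colouring of $E(G)\setminus\{e\}$ breaking every non-identity element of $\Gamma=\Aut(G,r)$ yields $n-1$ pairwise non-isomorphic distinguishing colourings, are both correct. But the dichotomy on which you build the whole proof --- ``such a flexible edge exists unless $(G,r)$ is one of the exceptions \ref{itm:thm:smallgraphs2}--\ref{itm:thm:smallgraphs4}'' --- is false. Take $G\cong C_4$ rooted at any vertex $r$, with $n=3$: the reflection in $\Aut(C_4,r)$ moves every edge, so no flexible edge exists, yet \ref{itm:thm:smallgraphs1} holds (colour a Hamiltonian path starting at $r$ blue and the remaining edge red; swapping the two colours gives a second, non-isomorphic, distinguishing colouring). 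Likewise $(K_{1,m},r)$ with $r$ the centre and $2\leq m<n-1$ has no $\Gamma$-fixed edge, since $\Gamma$ permutes the edges transitively, but it satisfies \ref{itm:thm:smallgraphs1} because there are $\binom{n-1}{m}\geq n-1$ non-isomorphic rainbow colourings. So rooted graphs without a fixed edge are not confined to your exceptional list, and your strategy provides no mechanism for them; this is a structural failure of the plan, not a boundary nuisance.

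Second, the inductive step is never actually performed. You delete a non-cutvertex $w\neq r$ and invoke the hypothesis on $(G-w,r)$, but, as you yourself note, automorphisms of $(G,r)$ that move $w$ are invisible to that hypothesis; they must be broken by the colours on the edges at $w$ while simultaneously keeping the $n-1$ extensions non-isomorphic, and you label this ``the heart of the argument'' without supplying it. You also never treat the case in which $(G-w,r)$ itself falls under \ref{itm:thm:smallgraphs2} or \ref{itm:thm:smallgraphs3} (for instance $G-w\cong K_{1,n-1}$ rooted at its centre), where the induction hypothesis yields no colourings at all, and the ``small base cases'' are merely asserted to check out. For comparison, the paper inducts on the number of edges rather than vertices, splits according to whether $G[N(r)\cup\{r\}]=G$, decomposes $N(r)$ into $\Gamma$-orbits, and leans on Theorem \ref{thm:gorzkow} twice: to settle the base cases ($K_2$, $K_3$, $K_4$, $K_5$, $C_4$, $C_5$, handled by Hamiltonian-path colourings) and to provide a distinguishing colouring of all of $G$ that is then modified locally near $r$ or near a neighbour of $r$. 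Your proposal never invokes Theorem \ref{thm:gorzkow}, so even if its gaps were repaired it would amount to reproving the finite star-free theorem from scratch.
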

\begin{proof}
Assume that none of the cases \ref{itm:thm:smallgraphs2}--\ref{itm:thm:smallgraphs4} holds. We shall prove that the condition \ref{itm:thm:smallgraphs1} is satisfied for $(G,r)$.
By Theorem \ref{thm:gorzkow}  every connected finite $K_{1,n}$-free graph which does not have distinguishing index at most $n-1$ has order at most five.
These are exactly the graphs isomorphic to $K_2$, $K_3$, $K_4$, $K_5$, $C_4$, or $C_5$. Let $G$ be one of these graphs, and let $r$ be an arbitrary vertex of $G$. Notice that $G$ contains a Hamiltonian path $P$ with endvertex $r$. 
Hence, the colouring which assigns colour blue to every edge of $P$, and which assigns colour red to all  the remaining edges of $G$ is a distinguishing colouring of $(G,r)$ using at most two colours. 
We can obtain $n-1$ non-isomorphic colouring if we swap colour blue with another colour.
Additionally, it follows from Theorem \ref{thm:gorzkow} that for every connected finite $K_{1,n}$-free graph $G$ and every $r \in V(G)$ there exists a distinguishing colouring of $(G,r)$ using at most $n-1$ colours.

Now consider the induction on the size of $G$. By the previous paragraph the statement of the theorem holds for $K_2$. Assume that the statement of the theorem holds for every connected finite $K_{1,n}$-free graph of size less than $||G||$.

First, suppose that  $G[N(r) \cup \{r\}]=G$.
Take any $x \in N(r)$, and consider its orbit $\Orb_\Gamma(x)$ with respect to the group $\Gamma=\Aut(G,r)$.
If $\Orb_\Gamma(x) \neq N(r)$, then define $G_1=G[\Orb_\Gamma(x) \cup\{r\}]$ and $G_2=G[(N(r)\cup \{r\}) \setminus \Orb_\Gamma(x)]$. By the inductive hypothesis, for every $i \in \{1,2\}$ there exist at least $n-1$ non-isomorphic distinguishing colourings  of $(G_i,r)$ if neither $G_i \cong K_{1,n-1}$,  $G_i \cong K_1$,
nor $G_i \cong \hg$ (if $n=3$). If for any $i \in \{1,2\}$ there exist $n-1$ non-isomorphic distinguishing colourings of $(G_i,r)$, then we colour $(G_1,r)$ and $(G_2,r)$ with  distinguishing colourings, and we colour the remaining edges of the graph arbitrarily. We can obtain at least $n-1$ non-isomorphic distinguishing colourings of $G$ by choosing one of the non-isomorphic colourings for $(G_1,r)$ or $(G_2,r)$.

If neither $(G_1,r)$ nor $(G_2,r)$ has $n-1$ non-isomorphic distinguishing colourings, then there exists an edge joining the vertices of $G_1-r$ and $G_2-r$. Otherwise, $G$ would contain an induced $K_{1,n}$. We colour $(G_1,r)$ and $(G_2,r)$ with  distinguishing colourings, and we colour all the remaining edges of $G$ with one colour. By choosing this colour, we can obtain $n-1$ non-isomorphic distinguishing colourings of $G$.

If $\Orb_\Gamma(x)=N(r)$, then it contains at most $k=n-1$ isomorphic components, say $X_0, \dots, X_{k-1}$. 
For every $i \in \{0,\dots,k-1\}$ we choose one edge $rx_i$ such that $x_i \in X_i$. Now let $k \leq n-2$. We choose $j \in \{ 0, \dots, n-2\}$, and for every $i \in \{0,\dots,k-1\}$ we colour the edge $rx_i$ with  colour $i$, each of the remaining edges joining $r$ and $X_i$ with colour $n-1$, and we colour each $(X_i,x_i)$ with a distinguishing edge colouring. Notice that this colouring is distinguishing for $(G,r)$.
We can obtain $n-1$ non-isomorphic distinguishing colourings if we choose $j \in \{ 0, \dots, n-2\}$ and  for every $i \in \{1,\dots,k-1\}$ we recolour each edge of colour $i$ with colour $(i+j) \mod (n-1)$.

If $\Orb_\Gamma(x)=N(r)$ and $k=n-1$, then each $X_i$ is a complete graph. Otherwise, $G$ would contain an induced $K_{1,n}$. 
If $X_i \cong K_1$, then $G \cong K_{1,n-1}$. If $X_i \ncong K_1$, then we colour one edge $rx_0$ joining $X_0$ and $r$ with colour $0$ and the rest of them with colour $1$, for every $i \in \{1, \dots, k-1\}$ we colour one edge $rx_i$ joining $X_i$ and $r$ with colour $0$ and the  remaining edges joining $r$ and $X_i$ with colour $i$. We colour $(X_i,x_i)$ with a distinguishing colouring (isomorphic for each $i$), and we colour $(X_0,x_0)$ with a non-isomorphic distinguishing colouring. If $k=2$ and $X_i \cong K_2$, then $n=3$ and $G \cong \hg$. Otherwise, we can obtain $n-1$ non-isomorphic colourings by choosing $j \in \{0, n-1 \}$ and increasing the colour of each edge in $X_0+r$ by $j \mod (n-1)$.

Suppose that $H=G[N(r) \cup \{r\}] \neq G$.
Let $c$ be a distinguishing edge colouring of $G$ using colours from $\{1,\dots n-1\}$.
We can obtain $n-1$ non-isomorphic distinguishing edge colourings from $c$ by recolouring $H=G[N(r) \cup \{r\}]$ with one of $n-1$ non-isomorphic colourings, which exist by the inductive hypothesis if $(H,r)$ satisfies neither \ref{itm:thm:smallgraphs2} nor \ref{itm:thm:smallgraphs3}. Therefore, we can assume that either \ref{itm:thm:smallgraphs2} or \ref{itm:thm:smallgraphs3} holds for $(H,r)$.

It follows that $H \neq G$. Hence, $N(x) \setminus (N(r) \cup \{r\})$ is non-empty for some $x \in N(r)$. Let $H'=G[N(x) \setminus (N(r) \cup \{r\})]$. Notice that $H'$ is not isomorphic to $K_{1,n-1}$ nor to the hourglass (if additionally $n=3$). Otherwise, $G[N(x) \setminus (N(r) \cup \{r\})]$ would contain $K_{1,n}$. We can obtain $n-1$ non-isomorphic distinguishing edge colourings of $(G,r)$ from $c$ by recolouring $(H',x)$ with one of $n-1$ non-isomorphic distinguishing edge colourings of $(H',x)$, which exist by the inductive hypothesis.
\end{proof}
Note that $K_{1,n}$ is a connected finite graph without $K_{1,n}$ which has exactly $n-1$ non-isomorphic colouring.
Assume that $G$ is a connected finite $K_{1,n}$-free graph, $H$ is an induced proper subgraph of $G$ isomorphic to $K_{1,n-1}$ or  to $\hg$ (if $n=3$). If a vertex $v$ is a neighbour in $G$ of some vertex of $H$, then $v$ is a neighbour of $x \in V(H)$ which is not the central vertex of $H$. Hence, by Theorem \ref{thm:smallgraphs}, there exist at least $n-1$ non-isomorphic distinguishing edge colourings of $(H,x)$. This observation shall be useful in the proof of Theorem \ref{thm:bezgwiazd}.

To extend Theorem \ref{thm:gorzkow} to all connected  locally finite graphs we also need the following  version of Lemma 4 from \cite{PS}. The proofs of both lemmas are similar to each other but instead of arbitrary rays in the proof of Lemma \ref{lem:rodzinaind} we consider induced rays. We leave the details of the proof to the reader.

\begin{lemma}\label{lem:rodzinaind}
If $G$ is an infinite locally finite connected graph, then there exists a maximal subgraph of $G$ such that its every component is an induced ray. \qed
\end{lemma}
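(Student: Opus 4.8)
The plan is to deduce the lemma from a single application of Zorn's lemma, following the argument for Lemma~4 in \cite{PS} but with every occurrence of a ray replaced by an induced ray. Let $\mathcal{P}$ be the collection of all subgraphs $F$ of $G$ whose every component is an induced ray; I will view an element of $\mathcal{P}$ as a set of pairwise vertex-disjoint induced rays, and I order $\mathcal{P}$ by inclusion. Before running Zorn I would note that $\mathcal{P}$ is non-empty: by K\"onig's lemma the infinite, connected, locally finite graph $G$ contains a ray, and from any ray one extracts an induced ray greedily, at each step using local finiteness to skip the finitely many vertices that would create a chord. Thus the problem is non-vacuous and it remains to exhibit a maximal element of $\mathcal{P}$.

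The routine part is the verification that chains behave well away from one exceptional phenomenon. Given a chain $\{F_i\}$ in $\mathcal{P}$, consider the union $U=\bigcup_i F_i$. Because the $F_i$ form a chain, no vertex gains a third incident edge in the limit, so $U$ has maximum degree at most two and each of its components is a finite path, a cycle, a ray, or a double ray. A finite path or a cycle is a finite subgraph and would already be contained in some $F_i$, contradicting that the components of $F_i$ are infinite rays; hence every component of $U$ is a ray or a double ray. The induced condition is likewise inherited: a chord of a component of $U$ is one edge joining two vertices that lie at finite distance along that component, so the chord together with the finite path between its ends is contained in some $F_i$, contradicting that $F_i$ has induced components. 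This is the only place where the present argument genuinely differs from \cite{PS}, and it is precisely the verification that ``induced'' passes to unions of chains.

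The step I expect to be the main obstacle is ruling out double-ray components of $U$, since a chain obtained by repeatedly prepending a new vertex to the root of a single ray has a double ray as its union, and then no member of $\mathcal{P}$ can serve as an upper bound. I would remove this difficulty by refining the order so that rays are never fused and never extended at their roots: fix for each ray its root (its unique vertex of degree one) and declare $F\leq F'$ only when $F'$ is obtained from $F$ by adjoining further induced rays, leaving the existing ones untouched. Under this order the components appearing along a chain are frozen, so the components of $U$ are exactly the induced rays that occur, $U\in\mathcal{P}$, and Zorn's lemma applies. A maximal element then cannot be enlarged by an induced ray; to obtain a subgraph that is maximal in the full sense of the lemma I would in addition take the rays of the family to be non-prependable, so that attaching any vertex outside $F$ would either raise a degree above two, create a chord, or fuse two rays into a double ray, none of which is permitted. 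The resulting subgraph is the required maximal one.
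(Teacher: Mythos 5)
Your first Zorn application is sound: under the refined order (new induced rays may be adjoined, existing components stay frozen) every chain has its union as an upper bound, and you correctly identify why the naive order by subgraph inclusion fails (double rays arise as limits of repeated prependings). The gap is in the last step. A maximal element $F$ of your refined poset only guarantees that no induced ray of $G$ disjoint from $V(F)$ can be adjoined; it does not give the maximality asserted in the lemma, and the sentence ``I would in addition take the rays of the family to be non-prependable'' is precisely the statement that still needs proof, not a construction. Non-prependability cannot simply be imposed: prepending a vertex produces a new family to which the same question applies, the process may involve infinitely many operations spread over infinitely many components, and even its per-ray termination is unclear a priori (if $G$ is a double ray, a single induced subray can be prepended forever). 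This is not a cosmetic omission, since the main theorem later uses exactly this property of $F$: that no ray of $F$ can be extended through its endvertex by a vertex outside $V(F)$.

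The missing idea that closes the gap is the following observation. Once $F$ is maximal with respect to adjoining disjoint induced rays, every component $C$ of $G-V(F)$ is finite: an infinite component would be an infinite connected locally finite graph, hence would contain an induced ray of $G[V(C)]$, which is induced in $G$ (an induced subgraph of an induced subgraph is induced) and disjoint from $V(F)$, contradicting maximality. Consequently, the vertices used by successive prependings of a fixed ray $R$ of $F$ form a path inside a single finite component of $G-V(F)$, so each ray admits only finitely many prependings. Now a second limit argument (for instance a second application of Zorn's lemma to the families obtained from $F$ by prepending finite paths to its rays, ordered by inclusion) goes through, because the bound just established prevents any component from degenerating into a double ray along a chain. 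A maximal element of this second poset is maximal in the full sense of the lemma: if $F'\supsetneq F$ has all components induced rays, then each component of $F$ is a subray of a component of $F'$ and distinct components of $F$ lie in distinct components of $F'$, so $F'$ either has a component disjoint from $V(F)$ (contradicting the first step) or properly extends some ray of $F$ backwards, in which case prepending the single new vertex adjacent to its root already contradicts the second step. Your write-up needs this two-step structure (or an equivalent termination argument); as it stands, the reduction announced in your final sentence is assumed rather than proved.
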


\section{Main result} \label{sec:nostars}

Now we can proceed to the proof of  Theorem \ref{thm:bezgwiazd}.

\begin{proof}
The case when $G$ is a finite graph follows from Theorem \ref{thm:gorzkow}. Hence, we can assume that $G$ is infinite.
Let $H' \neq K_1$ be a connected finite  $K_{1,n}$-free graph, and let $H\subseteq G$ be isomorphic to $H'$. We write $w \sim v$ if  $v, w \in V(H)$ and there exists an automorphism of $H$ which maps $w$ into $v$. Take any $w\in V(H)$. Assume that if $H$ is isomorphic to $K_{1,n-1}$ or  $H$ is isomorphic to the hourglass (if additionally $n=3$), then $w$ is not the central vertex of $H$. By Theorem \ref{thm:smallgraphs}, for each such $w$ we can find non-isomorphic distinguishing colourings $c(H,w,0)$ and $c(H,w,1)$ of $(H,w)$ such that they do not depend on the choice of  graph $H$ isomorphic to $H'$ nor of the choice of $w\in [v]_\sim$.

Let $F$ be a subgraph of $G$ from  Lemma \ref{lem:rodzinaind}. Denote by ${\mathcal{R}}= \{R_i \colon  2\leqslant i<\alpha \}$ the family of components of  $F$  for some $\alpha \in \{3,\dots,\omega \}$. Further, denote $R_i=v_{0,i}v_{1,i}\dots$ for every natural $i$ satisfying $2\leqslant i<\alpha$. Note that every element of $\mathcal{R}$ is an induced ray and $\alpha=\omega$ if and only if the family $\mathcal{R}$ is infinite. Denote by $\Gamma$ the group of automorphisms of $(G,F)$.

For every $u \in R \in \mathcal{R}$ denote by $u^+$ its successor in $R$ and by $u^-$ its predecessor, which exists if and only if $u$ is not the endvertex of $R$. Let $v\in R\in \mathcal{R}$, and let $w\in V(G) \setminus V(F)$ be a neighbour of $v$. If $v^+w$ is an edge in $G$, then we say that the edge $vw$ is \emph{supported} and  that $v^+w$ is the \emph{supporting edge} of $vw$. For $u \in V(F)$ denote by $\mathcal{B}(u)$ the set of these components of $G-V(F)$ which are connected to $u$ in $G$. We  now prove the following claim about supported and supporting edges.

\begin{claim}\label{stw:nostars}
If $v\in R \in \mathcal{R}$ and  $B\in \mathcal{B}(v)$, then $|\Orb_\Gamma(B)| \leqslant n-2$. Furthermore, if additionally $|\Orb_\Gamma(B)|=n-2$, then $v$ is the endvertex of $R$ or there exists a vertex $b \in B \in \Orb_\Gamma(B)$ such that $vb$ is a supported edge or a supporting edge.
\end{claim}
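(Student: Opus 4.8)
The plan is to derive both assertions from the $K_{1,n}$-freeness of $G$ by exhibiting large independent sets of neighbours of vertices of the ray $R$ through $v$, using local finiteness to close out the bound $|\Orb_\Gamma(B)|\le n-2$. Throughout write $m=|\Orb_\Gamma(B)|$ and let $B_1,\dots,B_m$ be the components in $\Orb_\Gamma(B)$; for a ray vertex $u$ and an index $j$ put $N_j(u)=N(u)\cap B_j$. Since $B\in\mathcal B(v)$ and $\Gamma$ fixes every vertex of $F$, every $\varphi\in\Gamma$ maps $v$ to itself and $B_j$ to some $B_{j'}$, so $\Orb_\Gamma(B)\subseteq\mathcal B(v)$ and all $m$ components are attached to $v$. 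The key auxiliary fact is a \emph{homogeneity} statement: for any two indices $j,j'$ and any ray vertex $u$ there is $\varphi\in\Gamma$ with $\varphi(B_j)=B_{j'}$ fixing $u$ and its successor $u^+$, so $\varphi$ carries $N_j(u)$ onto $N_{j'}(u)$ and preserves adjacency to $u^+$. Finally, vertices in distinct components of $G-V(F)$ are non-adjacent, so any transversal $\{c_1,\dots,c_m\}$ with $c_j\in N_j(u)$ is independent; I will use these facts repeatedly.

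I would first dispose of the ``furthermore'' part, which is a short independent-set count. Suppose $v$ is not the endvertex of $R$ (so $v^-$ exists) and that no edge $vb$ with $b$ in a component of $\Orb_\Gamma(B)$ is supported or supporting. Unwinding the definitions, this says exactly that no vertex of any $N_j(v)$ is adjacent to $v^+$ (no supported edge) and none is adjacent to $v^-$ (no supporting edge). Choosing any transversal $b_1,\dots,b_m$, the set $\{v^-,v^+,b_1,\dots,b_m\}$ is then independent: $v^-v^+$ is a non-edge because $R$ is induced, each $b_j$ avoids both $v^\pm$ by assumption, and the $b_j$ are pairwise non-adjacent. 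As all of these are neighbours of $v$, they give an induced $K_{1,m+2}$ at $v$. Hence if $m=n-2$ we obtain an induced $K_{1,n}$, a contradiction; so in the equality case a supported or supporting edge must exist.

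The substantive part is the bound $m\le n-2$, which I would prove by contradiction, assuming $m\ge n-1$ and running a binding argument forward along $R$. Fix $b\in N_1(v)$ and label $v=v_k,\ v^+=v_{k+1},\dots$ along the ray. The crucial local step is: at any ray vertex $u$ to which all of $B_1,\dots,B_m$ are attached, avoiding an induced $K_{1,n}$ centred at $u$ forces \emph{every} vertex of \emph{every} $N_j(u)$ to be adjacent to $u^+$. Indeed, if some $c^\ast\in N_{j_0}(u)$ were non-adjacent to $u^+$, homogeneity would yield for each $j$ a vertex $c_j\in N_j(u)$ non-adjacent to $u^+$, and then $\{u^+,c_1,\dots,c_m\}$ would be an independent set of $m+1\ge n$ neighbours of $u$, an induced $K_{1,n}$. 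Applying this at $u=v_k$ shows that $b$ is adjacent to $v_{k+1}$ and that each $B_j$, now meeting $N(v_{k+1})$, lies in $\mathcal B(v_{k+1})$; so the same hypotheses hold at $v_{k+1}$, and by induction at every $v_\ell$ with $\ell\ge k$. Since at each stage the fixed vertex $b$ remains a neighbour of the current ray vertex, it is forced to be adjacent to the next one, so $b$ is adjacent to $v_k,v_{k+1},v_{k+2},\dots$, i.e.\ to infinitely many vertices. This contradicts the local finiteness of $G$, and therefore $m\le n-2$.

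I expect the forward binding argument in the third paragraph to be the main obstacle, and the point most in need of care is the upgrade from ``some transversal hits $u^+$'' to ``every neighbour in every component is adjacent to $u^+$''. This is exactly where the full strength of $\Gamma$ fixing $V(F)$ pointwise is needed: without it one could only conclude a statement about a single representative per component, the tracked vertex $b$ might cease to be a neighbour as we pass to $v_{\ell+1}$, and the clean violation of local finiteness would break down. The remaining steps—the translation of ``no supported or supporting edge'' into non-adjacency to $v^+$ and $v^-$, and the verification that the exhibited stars are induced—are routine bookkeeping.
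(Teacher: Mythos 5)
Your proof is correct, and it follows the same overall strategy as the paper's: transversals across the pairwise non-adjacent components of $G-V(F)$ yield induced stars centred on the ray, the pointwise fixing of $V(F)$ by $\Gamma$ upgrades ``adjacent to one orbit element'' to ``adjacent to all of them'', adjacency is then propagated forward along $R$ to contradict local finiteness, and your treatment of the equality case (the induced star through $v^-$, $v^+$ and a transversal of $\Orb_\Gamma(B)$) is literally the paper's argument. The one place you genuinely go beyond the paper is the homogeneity upgrade in the bound $|\Orb_\Gamma(B)|\leq n-2$: the paper only propagates the component-level statement that every vertex of $R$ is connected to every element of $\Orb_\Gamma(B)$, and then asserts a contradiction with local finiteness --- which, as written, silently requires the components of $G-V(F)$ to be finite (true, because an infinite component would contain a ray induced in $G$ and disjoint from $F$, contradicting the maximality of $F$ from Lemma \ref{lem:rodzinaind}, but the paper never says this). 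Your vertex-level version --- every vertex of every $N_j(u)$ is adjacent to $u^+$, so the single tracked vertex $b$ stays adjacent to all later ray vertices and thus acquires infinite degree --- lands the contradiction at $b$ itself and needs no finiteness assumption on the components, making your write-up more self-contained than the original on this point.
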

\begin{proof} Consider an arbitrary induced star which contains edges joining $v$ and every component from $\mathcal{B}(v)$. As $G$ does not contain $K_{1,n}$ as an induced subgraph, we obtain $|\mathcal{B}(v)|\leq n-1$.

Suppose that there exists a component $B\in \mathcal{B}(v)$ such that its orbit $\Orb_\Gamma(B)$ has $n-1$ elements. The vertex $v^+$ is connected to some element of $\Orb_\Gamma(B)$. Otherwise, there exists an induced $K_{1,n}$ in $G$. Vertex $v^+$ is  connected to some element of  $\Orb_\Gamma(B)$. Hence, $v^+$ is connected to every element of  $\Orb_\Gamma(B)$. 
It follows by induction that every vertex of $R$ is connected to every element of $\Orb_\Gamma(B)$, contradicting the local finiteness of $G$.

Suppose that $|\Orb_\Gamma(B)|=n-2$. If none of the cases  from the statement of the theorem holds, then there exists an induced star $K_{1,n}$ which contains edges $vv^+$, $vv^-$, and an edge to each element of $\Orb_\Gamma (B)$. This contradicts the fact that $G$ does not contain an induced $K_{1,n}$, and it finishes the proof of the claim.
\end{proof}

Now we define the partition of $\mathcal{R}$ into sets $\mathcal{R}_0$ and $\mathcal{R}_\infty= \mathcal{R} \setminus \mathcal{R}_0$. Let $\mathcal{R}_0$ be the family of these rays $R \in \mathcal{R}$ which have a subray whose vertices do not have a neighbour outside $F$. Note that $R \in \mathcal{R}_\infty$ if and only if $R \in \mathcal{R}$  and there are infinitely many components of $G- V(F)$ which are connected to $R$ in graph $G$.
Define $I_0=\{i<\alpha\colon  R_i\in R_0 \}$ and $I_\infty=\{i<\alpha\colon  R_i\in R_\infty \}$.

Let $( v'_{j,k}\colon  j<\omega)$ be the enumeration of these vertices $v$ of ray $R_k\in \mathcal{R}_\infty$ for which $\mathcal{B}(v)\neq \emptyset$, ordered according to the order of vertices in $R_k$.
We shall  construct set $M=\{m_k\colon  k\in I_\infty  \}$ whose elements are natural numbers greater than one. If $m_k \in M$, then we say that the vertex $v'_{m_k,k}$ is the \emph{favourite vertex} of the ray $R_k$. Favourite vertices shall be used to distinguish rays in $R_\infty$ from each other. We construct  $M$ in such a way that the following conditions shall be satisfied  for all favourite vertices $u$, $v$:
\begin{enumerate}[labelindent=\parindent,leftmargin=*, label=(M\arabic*)]
    \item if $v \neq u$, then  $\mathcal{B}(v)\cap\mathcal{B}(u)=\emptyset$,\label{enum:M1}
    \item if $r$ is the endvertex of a ray $R$ and $v$ is the favourite vertex of  $R$, we have $\mathcal{B}(v)\cap \mathcal{B}(r)=\emptyset$,\label{enum:M2}
    \item if $B\in \mathcal{B}(v)$, then $\Orb_\Gamma (B)$ has at most $n-3$ elements or there exists a supported edge incident to $v$.\label{enum:M3}
    \end{enumerate}
We construct the set $M$ by induction on $l$. Assume that we have already constructed the set $M_l=\{m_k\colon  k< l, k \in I_\infty \}$ and that $M_l \neq M$. If $l \notin I_\infty$, then we proceed to the next index.
Therefore, we can assume that $l \in I_\infty$. Let $j'$ be the least natural number greater than every element of $M_l \cup \{1 \}$ such that for every $j$ satisfying $j\geq j'$, we have $\mathcal{B}(v'_{j,l})\cap \mathcal{B}(v_{0,l})=\emptyset$ and for every $k\in I_\infty, k < l$, we have $\mathcal{B}(v'_{j,l})\cap \mathcal{B}(v'_{m_k,k})=\emptyset$.
Such number $j'$ exists because  $\mathcal{B}(v)$ is finite for every $v\in V(F)$ and for every $k \in I_\infty$ the set $\{ v'_{j,k}\colon  j<\omega\}$ is infinite. If $|\mathcal{B}(v'_{j',l})|\leq n-3$ holds or $v'_{j',l}$ has a supported edge, then we put $m_l=j'$. Otherwise, $v'_{j'+1,l}$ does not have a supporting edge. Therefore, by Claim \ref{stw:nostars} 
the vertex $v'_{j'+1,l}$ has a supported edge or $|\mathcal{B}(v'_{j'+1,l})|\leq n-3$. In both of these cases we put $m_l=j'+1$. It follows from the construction that the conditions \ref{enum:M1}--\ref{enum:M3} are satisfied for all favourite vertices $u,v$.

Now we can proceed with the construction of a distinguishing edge colouring of $G$ using colours from the set $C=\{ 1,\dots,n-1\}$. We shall refer to   colour 1 as \emph{blue}, colour 2 as \emph{red} and  colour 3 as \emph{yellow}. Note that yellow is an element of $C$ if and only if $n>3$. We colour every edge joining distinct rays from $\mathcal{R}$ with red. Moreover, if $R  \in \mathcal{R}_\infty$, then we colour every edge of $R$ with blue. By induction on $i\in I_0$  we choose a natural number $k(i)$   in such a way that the following conditions are satisfied:

\begin{enumerate}[labelindent=\parindent,leftmargin=*, align=left,label=(K\arabic*)]
\item \label{k-a} $k(i)$ is  even number greater than two,
    \item \label{k-b} the subray of $R_i$ with endvertex $v_{\frac12 k(i),i}$ does not have any neighbour outside $F$,

    \item \label{k-c} for $i\neq j$ we have $k(i) \neq k(j)$,
    
    \item  \label{k-d} for $i \neq j$ vertices $v_{k(i),i}$ and $v_{2k(i)+1,i}$ are not neighbours of $v_{k(j),j}^-$ nor $v_{2k(j),j}$.
\end{enumerate}
Assume that we have already chosen the set $\{k(i)\colon  i<l,i \in \mathcal{R}_0 \}$.
There exists even number $m'$ such that for every even number $m$ satisfying $m \geq m'$  if we put $k(l)=m$, then the conditions \ref{k-a}--\ref{k-d} hold for each pair $\{i,j\}$ satisfying $i,j \leq l$. 
The existence of $m'$ follows from the fact that every ray from $\mathcal{R}_0$ has an infinite subray which does not have any neighbour outside $F$ and the conditions \ref{k-c} and \ref{k-d} for $i,j\leq l$ are satisfied for all but finitely many numbers $m=k(l)$. We put $k(l)=m'$, and we repeat the procedure for the next index $l$.

For $i\in I_0$ define $P(i)=v_{0,i}, \dots,v_{k(i),i}^-$, $P'(i)= v_{k(i),i}\dots,v_{2k(i),i}$, and further denote by $S_{R_i}$ the subray of $R_i$ with endvertex $v_{2k(i)+1,i}$.
For every $i\in I_0$ we colour the edges of $P(i),P'(i)$, and $S_{R_i}$ with blue, and we colour the remaining  two edges of $R_i$ with red. Now we colour the edges joining the vertices of $F$ and the vertices from $G-V(F)$ by induction with respect to $j$ and $i$.

$(*)$ Let $R_i \in \mathcal{R}$ be the ray with the least index among these for which there exists an uncoloured edge incident to some vertex of $R_i$. Similarly, let $v_{j,i}\in R_i$ be the vertex with the least index $j$ among these vertices of $R_i$ which have an uncoloured incident edge. For clarity denote $v=v_{j,i}$.
In the induction step we colour every edge incident to $v$  which was not coloured earlier. 
We choose one edge joining $v$ and each component of $\mathcal{B}(v)$.
If a component $B$ of $\mathcal{B}(v)$ is isomorphic to $K_{1,n-1}$ or $\hg$ (and additionally $n=3$), then $v$ is connected to a vertex of $B$ which is not its central vertex.
In said case we always choose an edge connected to vertex of $B$ which is not its central vertex. If we can, we choose a supported edge, next we choose a supporting edge, and then one of the remaining edges.

We colour the chosen edges with  some additional conditions. Denote by $B_w\in \mathcal{B}(v)$ the component of $G-V(F)$ containing $w$.
If any of the conditions below is satisfied, and we are supposed to colour $vw$ with a colour different from red, then we instead  colour it with red. 
\begin{enumerate}[labelindent=\parindent,leftmargin=*, align=left,label=(R\arabic*), series=Rnostars]
     \item \label{item:red1} $B_w$ is connected to the favourite  vertex $u\neq v$ of some $R \in \mathcal{R}$.
     \item \label{item:red2} There exists an edge joining $B_w$ and some vertex $u \in V(F)$ which is already coloured with a colour different from red.
     \end{enumerate}
Furthermore, if we are supposed to colour $vw$ with blue and the following condition is satisfied, then we instead colour it  with red.
 \begin{enumerate}[resume*=Rnostars]
     \item \label{item:red3} $B_w$ is isomorphic to $K_1$, and  $v$ is neither the favourite vertex of $R_i$ nor the endvertex of $R_i$.
 \end{enumerate}
 Moreover, if the  condition above is satisfied,  $|\Orb_\Gamma(B)|\leq n-3$, and we are supposed to colour $vw$ with yellow, then we instead colour $vw$ with red.
 
$(**)$ We now proceed to the colouring of the chosen edges. We consider the orbits of elements of $\mathcal{B}(v)$ with respect to $\Gamma=\Aut(G,F)$. If $|\Orb_\Gamma(B)|\leqslant n-3$, then yellow is in $C$. We colour one of the chosen edges joining $v$ and some component of $\Orb_\Gamma(B)$ with yellow, and we colour all the remaining chosen edges joining $v$ and the components in $\Orb_\Gamma(B)$ with distinct colours except yellow, blue and red.

Suppose $|\Orb_\Gamma(B)|= n-2$. By Lemma \ref{stw:nostars}, $v$ is  the endvertex of $R_i$, $v$ has a supported edge to some component in $\Orb_\Gamma(B)$, or $v$ has a supporting edge to some component in $\Orb_\Gamma(B)$. Now we consider cases.
\begin{enumerate}[labelindent=\parindent,leftmargin=*, align=left,label=(C\arabic*), series=Cnostars]
\item If $v$ is the endvertex of $R_i$ and $B$ is isomorphic to $K_1$ or $v$ is the endvertex of $R_i$ and $n=3$, then we colour one of the chosen edges to $\Orb_\Gamma(B)$ with blue, and we colour the remaining chosen edges to $\Orb_\Gamma(B)$ with distinct colours except blue. 

\item If $v$ is the endvertex of $R_i$, $B$ is not isomorphic to $K_1$, and $n>3$, then we colour two of the chosen edges to $\Orb_\Gamma(B)$ with blue, and we colour the remaining chosen edges with distinct colours except blue and red.

\item If $v$ is not the endvertex of $R_i$ and we have chosen a supported edge to $\Orb_\Gamma(B)$, then we colour one of such edges with blue, and we colour the remaining chosen edges with distinct colours except blue and red. 

\item If $v$ is not the endvertex of $R_i$ and $v$ does not have a supported edge to $\Orb_\Gamma(B)$, then  $v$ has a supporting edge to $\Orb_\Gamma(B)$. In that case, at least one of the supporting edges, say $e$, to $\Orb_\Gamma(B)$ was chosen.
Notice that the edge supported by $e$ was coloured when we were considering $v^-$. By the conditions \ref{item:red1}--\ref{item:red3} and the fact that we have already coloured the edge supported by $e$, we have to colour $e$ with red. We colour the remaining chosen edges joining $v$ and $\Orb_\Gamma(B)$ with distinct colours except blue and red.
\end{enumerate}
Next, we colour all the edges joining $v$ and $\Orb_\Gamma(B)$ which are not chosen, with red. If there exists an orbit $\Orb_\Gamma(B')$ of some $B' \in \mathcal{B}(v)$ which was not considered yet, then we put $B=B'$ and we return to $(**)$.
If there exists an uncoloured edge incident to some vertex of $R_i$, then we increase $j$ and we return to $(*)$. Similarly, if there is no such edge but there exists an uncoloured edge incident to some vertex in $V(F)$, then  we increase $i$,  we put $j=0$, and we return to $(*)$.

After the induction with respect to $j$ and $i$ all the edges incident to vertices of $V(F)$ are coloured. Notice that from the construction of the partial colouring, we obtained that if a component $B$ of $G-V(F)$ is not isomorphic to $K_1$, then there exists exactly one edge joining $B$ and $F$ of colour different from red. This follows from the conditions \ref{item:red1}--\ref{item:red3} and the fact that if none of said conditions holds, then we do not colour the chosen edge with red unless it is a supporting edge.

Let $\Gamma'$ be a set of these automorphisms of $(G,F)$ which preserve partial colouring we have defined so far, and let $B$ be some component of $G-V(F)$.
If $B$ is isomorphic to $K_1$, then denote by $w_B$ the only vertex of $B$. Otherwise, denote by $w_B$ the unique vertex of $B$ which has an edge to $F$ of colour different from red. Notice that from the construction of the partial colouring, it follows that $B$ is stabilized with respect to $\Gamma'$ unless $B'$ is incident to the endvertex $r$ of some ray in $\mathcal{R}$.
Moreover, if $B$ is not stabilized with respect to $\Gamma'$, then $B$ is not isomorphic to $K_1$ and there exists $B' \in \Orb_{\Gamma'}(B)$ distinct from $B$ such that both $B$ and $B'$ are connected with blue edges to  the endvertex $r$ of some $R\in \mathcal{R}$. We colour the rooted graph $(B,w_B)$ with  colouring $c(B,w_B,0)$, and we colour the rooted graph $(B',w_{B'})$ with  colouring $c(B',w_{B'},1)$. Components $B$ and $B'$ are stabilized by the group $\Gamma''\subseteq \Gamma'$ of these automorphisms of $(G,F)$ which preserve the so far defined  partial colouring. We repeat the procedure for each component $B$ which is not coloured and is not stabilized with respect to $\Gamma'$.

Next, if $B \not\simeq K_1$ and $B$ is not connected to any of the favourite vertices, we colour $(B, w_B)$ with  colouring $c(B,w_B,0)$. If $B \not\simeq K_1$ and $B$ is connected to one of the favourite vertices, then $w_B$ is connected to this vertex with a non-red edge and we colour $(B, w_B)$ with  colouring $c(B,w_B,1)$. After repeating this procedure for every component $B$ of $G-V(F)$ we obtain a distinguishing colouring $c$ of graph $(G,F)$.

To show that the colouring $c$ is distinguishing for $G$ it remains to prove that $F$ is fixed by $c$ with respect to the group of automorphism of $G$. First, we show that the family $\mathcal{R}_1=\mathcal{R}_\infty\cup \{S_R\colon  R\in \mathcal{R}_0 \}$ is stabilized. 
 Assume that there exists $Q\notin \mathcal{R}_1$ which is a maximal blue induced ray. Every component of $G-V(F)$ is connected to vertices of $F$ with at most one blue edge. Hence, $Q$ has to contain a subray of some ray from  $\mathcal{R}_1$.
Notice that if $u\in R\in \mathcal{R}$ is not the endvertex of $R$ and $u$ is connected to some vertex $w \in V(G) \setminus V(F)$ with a blue edge, then $uw$ is a supported edge and  $u^+$ is connected to $w$. Thus, a blue ray which contains vertices $u, u^+$, and $w$ cannot be induced.
If $u\in R\in \mathcal{R}$ is the endvertex of $R$ and $u$ is connected to vertex $w \in V(G) \setminus V(F)$ with a blue edge, then $G[V(R) \cup \{w\}]$ is not an induced ray because $R\subsetneq G[V(R) \cup \{w\}]$ and $R$ is the maximal induced blue ray. The latter follows from properties of family $F$ from Lemma \ref{lem:rodzinaind} and the fact that the edges joining distinct rays from $\mathcal{R}$ are red. Therefore, we proved that the family $\mathcal{R}_1=\mathcal{R}_\infty\cup \{S_R\colon  R\in \mathcal{R}_0 \}$ is stabilized by $c$.

Every edge joining $\{S_R\colon  R\in \mathcal{R}\}$  and the rest of the graph is red, and every ray from the family $\mathcal{R}_\infty$ has at least one incident edge of colour different from red (in particular the edge incident to its favourite edge). It follows that the sets $\{S_R\colon R\in \mathcal{R}_0\}$ and $\mathcal{R}_\infty$ are stabilized by $c$.

Now we show that the rays from the family $\mathcal{R}_0$ are fixed. Let $i \in I_0$. Notice that $P'(i)$ is the only maximal blue induced path of even length with an endvertex which is a neighbour of the endvertex of $S_{R_i}$. Only one of the endvertices of $P'(i)$ is a neighbour of the endvertex of $S_{R_i}$,  and for $j \in I_0,j\neq i$ paths 
$P'(i)$ and $P'(j)$ have different length. It follows that $S_{R_i}$ and $P'(i)$ are fixed.
Endvertex $v_{k(i),i}$ of the path $P'(i)$ is a neighbour of only one of the endvertices of $P(i)$, which is not the endvertex of $R_i$. 
The paths from the family $\{P_j\colon  j \in I_0 \}$ are the only maximal blue induced paths whose endvertex may be a neighbour of $v_{k(i),i}$.
Path $P(i)$ is the only element of $\{P_j\colon j \in I_0 \}$ for which $|P(i)|+1=|P'(i)|$. Moreover, only one of the endvertices of $P(i)$ is a neighbour of $v_{k(i),i}$. It follows than $R_i$ is fixed by $c$. Therefore, we obtained that $\mathcal{R}_0$ is  fixed by $c$.

It remains to show that the rays from the family $\mathcal{R}_\infty$ are fixed by $c$. If $m$ is the favourite vertex of some ray in $\mathcal{R}_\infty$, then it is distinguished from each vertex which is not the favourite vertex of any ray in $\mathcal{R}_\infty$. 
This follows from the fact that $m$ is the unique vertex of some ray in $\mathcal{R}_\infty$ such that $m$ is not the endvertex of this ray and one of the following conditions is satisfied:
\begin{enumerate}[labelindent=\parindent,leftmargin=*, align=left,label=(M\arabic*)]
\item $m$ has an incident blue edge $mw_B$ connecting $m$ and some component $B$ of $G-V(F)$ such that the rooted graph $(B,w_B)$ has  colouring $c(B,w_B,1)$,
\item $B$ is isomorphic to $K_1$ and the edge $mw_B$ is blue (if $|\Orb_\Gamma(B)|\geq n-2$) or yellow (if $|\Orb_\Gamma(B)|\leq n-3$).
\end{enumerate}
 Recall that if $j,k \in I_\infty, j \neq k$, then $m_k \neq m_j$. It follows that the favourite vertices are distinguished from each other. Therefore, they are fixed in $c$. In consequence, all elements of $\mathcal{R}_\infty$ are fixed by $c$. Hence, we obtained a distinguishing colouring of $G$ using at most $n-1$ colours.
\end{proof}

\bibliographystyle{abbrv}
\bibliography{sources}

\end{document}